\title{Ramsey-Tur\'an numbers
 for intersecting odd cliques}
\author{Guantao Chen\thanks{ Supported in part by NSF grant DMS-1855716}    ~and
 Min Liu\thanks{Supported in part by the Research Foundation of College of Economics, Northwest  University of Politics and Law (No. 19XYKY07),  and  by China Scholarship Council (No. 201808610115). }
 %\thanks{supported in part by NSF grant DMS-1855716.}
 \unskip\\[.5em]
{\small  Department of Mathematics and Statistics, Georgia State University, Atlanta, GA 30303}\\
{\small  College of Economics, Northwest University of Politics and Law, Xi'an, Shaanxi, China 710063}\\
}
\date{}
\newtheorem{lem}{Lemma}
\newtheorem{thm}{Theorem}
\numberwithin{equation}{section}
\newcommand{\FF}{\mathcal{F}}
\begin{document}
%\footnotetext{$\ast$This work is supported by NFSC (NO.10831001).
%\\\mbox{}\hspace{18pt}$\ast\ast$Corresponding author,
%E-mail address: }\\
\newcommand{\udots}{\mathinner{\mskip1mu\raise1pt\vbox{\kern7pt\hbox{.}}
\mskip2mu\raise4pt\hbox{.}\mskip2mu\raise7pt\hbox{.}\mskip1mu}}

\maketitle

\begin{abstract}
Given a graph  $H$  and a function  $f:\mathbb{Z}^+ \longrightarrow \mathbb{Z}^+ $, the Ramsey-Tur\'an number of $H$ and $f$, denoted by $RT(n, H, f(n))$, is the maximum number of edges a graph $G$ on $n$ vertices can have, which does not contain $H$ as a subgraph and also does not contain a set of $f(n)$ independent vertices.  Let $r$ be a positive integer. In 1969, Erd\H{o}s and S\'os  proved that  $RT(n,K_{2r+1},o(n))=\frac{n^2}{2}(1-\frac{1}{r})+o(n^2)$.  Let $F_k(2r+1)$ denote the graph consisting of $k$ copies of complete graphs $K_{2r+1}$ sharing exactly one vertex.  In this paper, we show that $RT(n,F_k(2r+1),o(n))=\frac{n^2}{2}(1-\frac{1}{r})+o(n^2)$, which is of the same magnitude with $RT(n, K_{2r+1}, o(n))$.

\end{abstract}

\par {\small {\it Keywords: }\ \  Tur\'an number, Ramsey-Tur\'an number,  odd cliques,  independent set }

\vskip 0.2in \baselineskip 0.18in
%%%%%
%%%%%
%%%%%%%%%%%%%%%%%%%%%%%%%%%%%%%%%%%%%%%%%%%%%%%%%%%%%%%%%%%%%%%%%%%%%%%%%%%%%%%%%%%%%%%%%%%%%%%%%%%%%%%%%%%%%%%%%%%%%%%%%%%%%%%
\section{Introduction }
Let $H$ be a graph. A graph $G$ is called {\em $H$-free} if $G$ does not contain  a copy of $H$ as a subgraph.  For a positive integer $n$, let $ex(n, H)$ denote the maximum number of edges of an $H$-free graph of order $n$, and call such a graph an {\em extremal graph} for $H$.  For any positive integer $r$, let $K_r$ denote the $n$-vertex complete graph.  Tur\'an's classical result states that  the balanced complete $n$-vertex $r$-partite graph, denoted by $T_r(n)$, is the unique
$K_{r+1}$-extremal graph of order $n$.  A graph on $2k+1$ vertices consisting of $k$ triangles which intersect in exactly one common vertex is called a {\em $k$-fan} and denoted by $F_k$. Erd\H{o}s, F\"uredi, Gould, and Gunderson~\cite{ErdosFGG95} determined  $ex(n, F_k)$ for $n\ge 50k^2$. A graph on $(r-1)k+1$ vertices consisting of $k$ copies of $K_r$ sharing exactly one common vertex is called a {\em $(k, r)$-fan} and denoted by $F_k(r)$. Chen, Gould, Pfender, and Wei~\cite{ChenGPW03} determined $ex(n, F_k(r))$ for $n \ge 16k^3r^8$.

Notice that the independence number $\alpha(T_r(n))$ is at least $\frac{n}{r}$.
Given a graph $H$ and a function $f: \ \mathbb{Z}^+ \longrightarrow \mathbb{Z}^+$, the {\em Ramsey-Tur\'an number} for a large positive integer $n$ with regard to $H$ and $f(n)$,  denoted by $RT(n, H, f(n))$, is the maximum number of edges of an $n$-vertex
$H$-free graph $G$ with $\alpha(G) \le f(n)$. Erd\H{o}s and S\'{o}s in 1969~\cite{ErdosSos69} studied of Ramsey-Tur\'an numbers and showed that $RT(n, K_{2r+1}, o(n)) = \frac {n^2}2 (1 -\frac 1r) +o(n^2)$, which is approximately the value of $ex(n, K_{r+1})$. The study of  Ramsey-Tur\'an numbers
 has drawn a great deal of attention over the last 40 years; see the survey by Simonovits and S\'os \cite{survey} and  \cite{1983moreresults, Fox2015, liuhong} for more results on various Ramsey-Tur\'{a}n problems.  Inspired by the study of extremal numbers $ex(n, F_k)$ and $ex(n, F_k(r))$ in~\cite{ErdosFGG95} and~\cite{ChenGPW03}, respectively, we consider Ramsey-Tur\'an numbers for $F_k(r)$ and obtained the following result.

 \begin{thm}\label{kr}
For any two fixed positive integers $k$ and $r$,
\begin{equation*}
{\displaystyle RT(n, F_{k}(2r+1),o(n))=\frac{n^2}{2}\left(1-\frac{1}{r}\right)+o(n^2).  }
\end{equation*}
\end{thm}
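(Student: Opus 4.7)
The lower bound is immediate. Since $F_k(2r+1)$ contains $K_{2r+1}$ as a subgraph, every $K_{2r+1}$-free graph is also $F_k(2r+1)$-free, so the Erd\H{o}s--S\'{o}s extremal construction yields $RT(n,F_k(2r+1),o(n)) \ge RT(n,K_{2r+1},o(n)) = \frac{n^2}{2}(1-\frac{1}{r}) + o(n^2)$. The content is the matching upper bound: fix an $n$-vertex graph $G$ with $\alpha(G) \le f(n) = o(n)$ and no copy of $F_k(2r+1)$, and show $e(G) \le \frac{n^2}{2}(1-\frac{1}{r}) + o(n^2)$. The plan is to combine (i) a ``few $K_{2r+1}$'s'' bound forced by $F_k(2r+1)$-freeness with (ii) the graph removal lemma, reducing to the Erd\H{o}s--S\'{o}s upper bound applied to a cleaned, $K_{2r+1}$-free subgraph.

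For (i), fix $v \in V(G)$. Since $v$ is not the centre of any copy of $F_k(2r+1)$, the link graph $G[N(v)]$ contains no $k$ vertex-disjoint copies of $K_{2r}$; a maximum vertex-disjoint $K_{2r}$-packing in $G[N(v)]$ therefore has at most $k-1$ parts and covers a set $S_v \subseteq N(v)$ with $|S_v| \le 2r(k-1)$ that, by maximality, meets every $K_{2r}$ in $G[N(v)]$. Consequently $G[N(v)]$ contains at most $|S_v|\binom{n}{2r-1} \le 2r(k-1)\binom{n}{2r-1}$ copies of $K_{2r}$, and summing the quantity $t(v) := \#\{K_{2r+1}\ni v\} = \#\{K_{2r}\subseteq G[N(v)]\}$ over $v$ and dividing by $2r+1$ yields
\[
\#\{K_{2r+1}\subseteq G\} \;\le\; \frac{n\cdot 2r(k-1)}{2r+1}\binom{n}{2r-1} \;=\; O(n^{2r}) \;=\; o(n^{2r+1}).
\]

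For (ii), the graph removal lemma for $K_{2r+1}$ provides, for every $\gamma>0$, a $\delta=\delta(\gamma,r)>0$ such that any $n$-vertex graph with fewer than $\delta n^{2r+1}$ copies of $K_{2r+1}$ can be made $K_{2r+1}$-free by deleting at most $\gamma n^2$ edges. Applying this to $G$ yields a $K_{2r+1}$-free subgraph $G'$ with $e(G')\ge e(G)-\gamma n^2$; the essential check is that $\alpha(G')$ remains $o(n)$, so that Erd\H{o}s--S\'{o}s applies. Indeed, any independent set $I$ in $G'$ has all its $G$-edges among the deleted edges, so $e_G(I)\le \gamma n^2$, while Tur\'an applied to the complement of $G[I]$ together with $\alpha(G[I])\le f(n)$ gives $e_G(I)\ge |I|^2/(2f(n))-|I|/2$; combining these yields $|I|=O(n\sqrt{\gamma f(n)})$, which is $o(n)$ provided one can arrange $\gamma f(n)=o(1)$. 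Once $\alpha(G')=o(n)$ is secured, Erd\H{o}s--S\'{o}s delivers $e(G')\le \frac{n^2}{2}(1-\frac{1}{r})+o(n^2)$, and hence $e(G)\le e(G')+\gamma n^2 \le \frac{n^2}{2}(1-\frac{1}{r})+o(n^2)$. The main technical obstacle is the quantitative balancing of $\gamma=\gamma_n$ as $n\to\infty$: one needs $\delta(\gamma_n)\ge C/n$ (so that the removal lemma is applicable given the $O(n^{2r})$ bound from step (i)) while simultaneously $\gamma_n f(n)=o(1)$ (so that $\alpha(G')=o(n)$); since the dependence $\gamma\mapsto\delta(\gamma)$ in the removal lemma is tower-type, carrying out this balance uniformly for the full range $f(n)=o(n)$ is the most delicate step, and I expect most of the technical work to lie there (alternatively, one could try to replace the removal lemma step by a direct Szemer\'edi-regularity supersaturation argument that exhibits $\omega(n^{2r})$ copies of $K_{2r+1}$ in $G$ and contradicts (i) directly).
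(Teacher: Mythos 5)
Your lower bound is fine, and your step (i) is correct and genuinely nice: the observation that $F_k(2r+1)$-freeness forces every link graph $G[N(v)]$ to have a $K_{2r}$-cover of bounded size, hence only $O(n^{2r})$ copies of $K_{2r+1}$ in $G$, is a clean supersaturation-type bound that the paper does not use at all. The gap is in step (ii), and it is more serious than the quantitative balancing issue you flag at the end. In the standard reading of $RT(n,H,o(n))$ (the one the paper works with), one fixes $\epsilon>0$, chooses a small constant $\delta=\delta(\epsilon)>0$, and assumes $\alpha(G)\le \delta n$; so $f(n)=\delta n$. A set of $cn$ vertices can satisfy $\alpha(G[S])\le\delta n$ while spanning only about $(cn)^2/(2\delta n)=O(n)$ edges, so deleting those $O(n)$ edges --- far fewer than the $\gamma n^2$ the removal lemma is allowed to delete --- already creates an independent set of linear size. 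Hence there is no way to guarantee $\alpha(G')\le \delta' n$, let alone $\alpha(G')=o(n)$, after the cleaning step. Your own estimate confirms this: with $f(n)=\delta n$ the inequality $|I|^2/(2f(n))-|I|/2\le\gamma n^2$ only yields $|I|=O\bigl(n^{3/2}\sqrt{\gamma\delta}\bigr)$, which is vacuous. This failure of removal/cleaning arguments to preserve independence-number hypotheses is precisely why Ramsey--Tur\'an problems are not corollaries of removal lemmas (compare $RT(n,K_4,o(n))$). Even in the regime where $f(n)=o(n)$ decays, your two requirements $\delta(\gamma_n)\ge C/n$ and $\gamma_n=o(1/f(n))$ are incompatible with the tower-type dependence in the removal lemma, as you note. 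The fallback you suggest --- exhibiting $\omega(n^{2r})$ copies of $K_{2r+1}$ to contradict (i) directly --- would indeed close the proof, but it is itself an unproved key step: ordinary supersaturation does not apply because the edge density $1-\frac{1}{r}+\epsilon$ is below the Tur\'an density of $K_{2r+1}$, and random-subset sampling destroys the hypothesis $\alpha(G)\le\delta n$.

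For contrast, the paper's upper-bound argument is entirely direct and avoids counting copies. Lemma~\ref{lem 2} extracts a subgraph on $\Omega_\epsilon(n)$ vertices with minimum degree at least $\bigl(1-\frac{1}{r}+\Omega(\epsilon)\bigr)$ times its order; Lemma~\ref{lem 3} shows any clique of size at most $r$ whose vertices have such degrees extends to a larger clique; Lemma~\ref{lem 4} uses the minimum degree together with $\alpha\le\delta n$ to extend any clique of size at most $2r$ after swapping at most one vertex. A lexicographic maximality argument on auxiliary structures $F_{k_1,\dots,k_{r+1}}(2r+1)$ (several fans whose centers form an $(r+1)$-clique) then either produces a fan with $k$ blades or contradicts maximality. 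If you want to salvage your route, the missing ingredient is a Ramsey--Tur\'an supersaturation statement; proving one would essentially require redoing the paper's greedy clique-building argument in a counting form.
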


In the next section we will first give a short proof of the case $r=1$, i.e., $RT(n, F_{k}(3), o(n)) = o(n^2)$, and then prove the general case.  We now introduce notations and terminology  that will be used in the proof.

All graphs considered in this paper are simple graphs. Let  $G=(V(G), E(G))$  be a graph with vertex set $V(G)$ and edge set $E(G)$.  Denote by $|G|$ and $||G||$ for $|V(G)|$ and $|E(G)|$, respectively;  and call them the order and the size of $G$, respectively.  Denote by $G(n)$ a graph of order $n$ and denote by $G(n, m)$ a graph of order $n$ and size $m$. For a vertex $x\in V(G)$, the \emph{neighborhood} of $x$ in $G$, denoted by $N_G(x)$,  is the set of vertices adjacent to $x$, i.e.,  $N_{G}(x)=\{y\in V(G):xy\in E(G)\}$. We use $N(x)$ for $N_G(x)$ when the referred graph $G$ is clear.  The \emph{degree} of $x$ in $G$, denoted by $d_G(x)$, or $d(x)$, is the cardinality of $N_{G}(x)$, i.e., $d_G(x) = |N_G(x)|$.  Denote by $\delta(G)$ the minimum degree in $G$. For a subset $X\subseteq V(G)$, let $G[X]$ denote the subgraph of $G$ induced by $X$, i.e., a subgraph of $G$ with vertex set $X$ and the set of all edges with two ends in $X$.
For a vertex set $X = \{x_1, x_2, \dots, x_r\}$,
denote by  $G[x_1,x_2,\dots, x_r]$ for  $G[\{x_1,x_2,\dots, x_r\}]$.

The
\emph{matching number} of a graph $G$, denoted by $\nu(G)$, is the maximum number of edges in a matching in $G$. Let $\alpha(G)$ denote the {\em independence number} of $G$. Clearly, if $G$ is an $n$-vertex graph, then $\alpha(G)\ge n - 2\nu(G)$.

\section{Proof of Theorem~\ref{kr} }

\subsection{Prilimilary}
We first consider triangles sharing a common vertex and prove the following simple fact.

\begin{lem}\label{lem 1}
For any  fixed positive integer $k$, $RT(n,F_{k}(3), o(n))=o(n^2)$.
\end{lem}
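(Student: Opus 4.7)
\medskip

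My plan is to reduce the statement to a degree bound that is immediate from the small independence number. The key observation is a local fan-detection criterion: if some vertex $v\in V(G)$ satisfies $\nu(G[N(v)])\ge k$, then the $k$ independent edges in $N(v)$, together with $v$ as common apex, form $k$ triangles sharing only $v$, i.e., a copy of $F_k(3)$. So to prove the lemma I only need to show that any $n$-vertex graph $G$ with $\alpha(G)=o(n)$ and $\|G\|\neq o(n^2)$ must contain such a vertex $v$.

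I would argue the contrapositive. Suppose $\nu(G[N(v)])\le k-1$ for every $v\in V(G)$. Using the standard inequality $\alpha(H)\ge |V(H)|-2\nu(H)$ (which follows from $\tau(H)\le 2\nu(H)$ combined with Gallai's identity $\alpha(H)+\tau(H)=|V(H)|$), I get
\[
\alpha(G[N(v)])\ge d(v)-2(k-1).
\]
Since $G[N(v)]$ is an induced subgraph of $G$, its independence number is at most $\alpha(G)\le f(n)=o(n)$. Hence $d(v)\le f(n)+2(k-1)=o(n)$ for every vertex $v$, and summing,
\[
2\|G\|=\sum_{v\in V(G)} d(v)\le n\bigl(f(n)+2(k-1)\bigr)=o(n^2).
\]
This gives the desired upper bound $RT(n,F_k(3),o(n))\le o(n^2)$, and the matching lower bound $0$ is trivial (an empty graph is $F_k(3)$-free with arbitrarily small independence number? — actually the lower bound here is just nonnegativity, so there is nothing to prove on that side).

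There is no real obstacle: the whole argument is a two-line calculation once one makes the fan-detection observation, and this already illustrates the general strategy that will reappear in the proof of Theorem~\ref{kr}, namely converting the existence of a $k$-fan inside a ``link'' into a matching problem and then bounding degrees via $\alpha$. The more substantial work in the main theorem will be in setting up the correct analogue of the link graph for odd cliques $K_{2r+1}$, where one must find $k$ disjoint copies of $K_{2r}$ in the neighborhood of a vertex rather than merely a matching; that is where the Erd\H{o}s--S\'os theorem together with a Ramsey-type argument or a regularity-based decomposition is expected to enter.
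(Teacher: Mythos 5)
Your proof is correct and follows essentially the same route as the paper: both arguments observe that $F_k(3)$-freeness forces $\nu(G[N(v)])\le k-1$ for every vertex $v$, deduce $\alpha(G[N(v)])\ge d(v)-2(k-1)$, and then use $\alpha(G[N(v)])\le\alpha(G)=o(n)$ to bound every degree and hence the edge count. No further comment is needed.
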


\begin{proof}
Let $G$ be an $n$-vertex $F_{k}(3)$-free graph with independence number
$\alpha(G) \le \epsilon n$ for some small positive real number $\epsilon$.
  For any $x\in V(G)$,
since $G$ does not contain $k$ edge-disjoint triangles intersecting in the vertex $x$, the neighborhood of $x$ contains at most $k-1$ independent edges, i.e., $\nu(G[N(x)]) \leq k-1$.  Thus, $\alpha (G[N(x)])\geq d(x)-2(k-1)$. Since $\alpha(G[N(x)]) \le \alpha(G) \le \epsilon n$, we have $d(x)\leq \epsilon n+2(k-1)$. Counting the total degree of $G$, we have the following inequality:
$|E(G)|\leq \frac 12 (\epsilon +2(k-1)/n)n^2$. Since $\lim_{n \rightarrow \infty} \frac 12 (\epsilon +2(k-1)/n) = \frac 12 \epsilon$, we complete the proof of Lemma~\ref{lem 1}.
\end{proof}

The following result from~\cite{ErdosSos69} is needed in our proof and for completeness we give its proof here.

\begin{lem} [Erd\H{o}s and  S\'os] \label{lem 2}
Let $\beta$ and $\epsilon$ be two positive numbers with $0< \beta < \frac{1}{2}$ and $0 < \epsilon < 1$. Then for every positive number $c$ with $0<c < \sqrt{\beta\epsilon}$ and any graph  $G(n)$ with $|E(G)|\geq  \beta n^2 (1 + \epsilon)$,  there is a subgraph $G(m)\subseteq G(n)$ with $m >  c n$ and $\delta(G(m)) > 2 \beta(1 + \epsilon/2)m$.
\end{lem}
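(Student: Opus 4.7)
The plan is to argue by contradiction using the standard ``degree-cleaning'' procedure (iterative removal of minimum-degree vertices). Suppose no subgraph $G(m)\subseteq G(n)$ with $m>cn$ and $\delta(G(m))>2\beta(1+\epsilon/2)m$ exists. Then for every induced subgraph $G'\subseteq G(n)$ with $|V(G')|>cn$ one must have $\delta(G')\le 2\beta(1+\epsilon/2)|V(G')|$. Starting from $G_n=G(n)$, iteratively remove a vertex of minimum degree from the current graph $G_s$ to obtain $G_{s-1}$; by hypothesis this process never halts while $s>cn$, so it continues down to some $G_{m^\ast}$ with $m^\ast\le cn$.

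The key step is an edge-budget accounting. When $G_s$ has order $s>cn$, the vertex deleted has degree at most $2\beta(1+\epsilon/2)s$, so at most that many edges are removed at that step. Summing, the total number of edges deleted in passing from $G_n$ down to $G_{m^\ast}$ is at most
\begin{equation*}
\sum_{s=m^\ast+1}^{n} 2\beta\left(1+\tfrac{\epsilon}{2}\right) s
\;=\; \beta\left(1+\tfrac{\epsilon}{2}\right)\bigl(n(n+1)-m^\ast(m^\ast+1)\bigr)
\;\le\; \beta\left(1+\tfrac{\epsilon}{2}\right)(n^2+n),
\end{equation*}
while the surviving graph trivially satisfies $|E(G_{m^\ast})|\le \binom{m^\ast}{2}\le \tfrac{c^2 n^2}{2}$. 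Combining these with the hypothesis $|E(G(n))|\ge \beta(1+\epsilon)n^2$ yields
\begin{equation*}
\beta(1+\epsilon)n^2 \;\le\; \beta\left(1+\tfrac{\epsilon}{2}\right)(n^2+n)+\tfrac{c^2 n^2}{2}.
\end{equation*}

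Finally, subtracting $\beta(1+\epsilon/2)n^2$ from both sides and dividing by $n^2$ produces $c^2\ge \beta\epsilon - \tfrac{2\beta(1+\epsilon/2)}{n}$, which for $n$ sufficiently large contradicts the hypothesis $c<\sqrt{\beta\epsilon}$, i.e.\ $c^2<\beta\epsilon$. I do not expect a real obstacle: the entire argument hinges on noticing that the gap between the ``$1+\epsilon$'' in the edge-count hypothesis and the ``$1+\epsilon/2$'' in the degree threshold produces a surplus of order $\tfrac{\beta\epsilon}{2}n^2$ on the left, which must then be absorbed by $\tfrac{c^2}{2}n^2$ plus vanishing lower-order terms, forcing $c^2\ge\beta\epsilon$ in the limit. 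The only care needed is in tracking the lower-order slack (the $+n$ from the arithmetic series, and using $\binom{m^\ast}{2}\le c^2n^2/2$ rather than a sharper estimate) and in taking $n$ implicitly large enough, consistent with the asymptotic setting in which Lemma~\ref{lem 2} will be applied.
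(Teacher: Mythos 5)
Your proposal is correct and is essentially the paper's own argument: the paper phrases the iterative minimum-degree deletion as an ordering $x_1,\dots,x_n$ in which each $x_i$ with $i\le(1-c)n$ has degree less than $2\beta(1+\epsilon/2)(n-i+1)$ in $G[x_i,\dots,x_n]$, then performs the same edge count $\beta n^2(1+\epsilon)\le 2\beta(1+\epsilon/2)\sum_{s>cn}s+\binom{cn}{2}<\beta n^2(1+\epsilon)$ to reach the same contradiction from $c<\sqrt{\beta\epsilon}$. Your handling of the lower-order terms (the $+n$ from the arithmetic series, absorbed for large $n$) is in fact slightly more explicit than the paper's.
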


\begin{proof}
To avoid cumbersome notation, we assume without loss of generality that $c n$ is an integer. Suppose on the contrary a such graph $G(m)$ does not exist.  Denote the graph $G(n)$ by $G$ with $|E(G)|\geq \beta n^2 (1 + \epsilon)$.  Then the vertices of $G$ can be written in a sequence $x_1, x_2, \dots, x_n$ so that for every $i \leq (1-c)n$ the degree of $x_i$ in $G[x_i, x_{i+1}, \dots, x_n]$ is less than $2\beta (1+ \epsilon/2) (n-i+1)    $.

\begin{eqnarray*}
\beta n^2(1+\epsilon)  \leq |E(G)|   & \leq & 2\beta \left(1+\frac{\epsilon}{2}\right)(n+(n-1)+\dots +(cn+1))+{cn \choose 2}\\
& < & 2\beta \left(1+\frac{\epsilon}{2}\right)\left(\frac{n^2}{2}\right)+\frac{c^2n^2}{2}\\
& < & \beta n^2(1+\epsilon) \qquad \mbox{ (since $c < \sqrt{\beta \epsilon}$), }
\end{eqnarray*}
which gives  a contradiction.
\end{proof}

\begin{lem}\label{lem 3}
Let $r$ be a positive integer. For any  $\epsilon >0$, there exists positive integer $N=N(\epsilon)$ such that for a clique $D$ of a graph $G(n)$ with $n>N$ and $|D| \le r$,  if
$d(v) \ge \left(1-\frac{1}{r}+\frac{\epsilon}{3}\right)n$ for each $v\in D$,
then there exists a clique $D^*$ that contains $D$ as a proper subset.
\end{lem}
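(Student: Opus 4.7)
The plan is to show that $D$ has a common neighbor outside itself, which then serves as the extra vertex extending $D$ to a larger clique $D^*$. Concretely, set $s = |D| \le r$ and write $D = \{v_1, \dots, v_s\}$. I would work with the common neighborhood $W := \bigcap_{i=1}^{s} N_G(v_i)$: any vertex $u \in W$ automatically satisfies $u \notin D$ (since $v_i \notin N_G(v_i)$ in a simple graph) and is adjacent to every vertex of $D$, so $D^* := D \cup \{u\}$ is a clique properly containing $D$. Thus the whole task reduces to verifying $W \ne \emptyset$ for $n$ sufficiently large.

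For this, the key is a one-line union bound applied to the complements $V(G) \setminus N_G(v_i)$. Each complement has size at most $n - d(v_i) \le (1/r - \epsilon/3)n$ by hypothesis, so
\begin{equation*}
|W| \;\ge\; n - \sum_{i=1}^{s}\bigl(n - d(v_i)\bigr) \;\ge\; n - s\left(\frac{1}{r} - \frac{\epsilon}{3}\right)n \;=\; \left(1 - \frac{s}{r}\right)n + \frac{s\epsilon}{3}n.
\end{equation*}
Because $1 \le s \le r$, the first term is non-negative and the second is at least $\epsilon n/3$, giving $|W| \ge \epsilon n/3$. Choosing $N(\epsilon) := \lceil 3/\epsilon \rceil$ then guarantees $|W| \ge 1$ whenever $n > N$, and any $u \in W$ yields the desired $D^* = D \cup \{u\}$.

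There is essentially no genuine obstacle here: the degree hypothesis is calibrated exactly so that the union bound succeeds for all clique sizes up to $r$. The extremal case is $s = r$, where the $(1 - s/r)n$ slack vanishes and the entire argument rests on the $\epsilon/3$ surplus in the degree lower bound; this is precisely what forces $N(\epsilon)$ to grow like $1/\epsilon$. For $s < r$ the bound is far from tight, so no refinement is needed.
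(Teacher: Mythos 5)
Your proposal is correct and is essentially the paper's own argument in a slightly different guise: the paper double-counts the edges between $D$ and $V(G)\setminus D$ to lower-bound the number $m$ of common neighbors, which yields exactly your inequality $m \ge n - \left(\frac{1}{r}-\frac{\epsilon}{3}\right)|D|\,n > 0$, the same quantity your union bound over the complements $V(G)\setminus N_G(v_i)$ produces. The only cosmetic difference is that your inclusion--exclusion phrasing makes the positivity immediate without any edge-counting bookkeeping.
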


\begin{proof}
Let $m$ denote the number of vertices in $V(G) - D$ that are adjacent to all vertices in $D$. It is sufficient to show that $m > 0$.  By counting the edges between $D$ and $G-D$, we have the following inequality.

\begin{eqnarray*}
|D|m+(|D|-1)(n-|D|-m)
 \geq \sum_{v\in D} (d(v)-(|D|-1)) \ge |D|\left(\left(1 -\frac 1r +\frac {\epsilon}3\right)n -(|D|-1)\right)
\end{eqnarray*}

\noindent Thus, $m   \geq  |D|\left(1-\frac{1}{r}+\frac{\epsilon}{3}\right)n-(|D|-1)n
=   n-\left(\frac{1}{r}-\frac{\epsilon}{3}\right)|D|n >  0$, where in the last inequality we used
   $|D|\leq r$.
\end{proof}

\begin{lem}\label{lem 4}
Let $r$ be a positive integer. For any  $\epsilon >0$,   there exist $\delta =\frac{\epsilon}{4}$ and $N=N(\epsilon)$ such that for any clique $D$
of a graph  $G(n)$ with $n>N$ and $\alpha(G(n))\leq \delta n$, if $|D|\leq 2r$
and $d(v) \geq (1-\frac{1}{r}+\frac{\epsilon}{3})n$ for each $v\in D$,  then there exists a clique $D^*$ with $|D^*|=|D|+1$ such that $|D^*\cap D|\ge |D| -1$.
\end{lem}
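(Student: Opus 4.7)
My plan is to split on whether $D$ has a common neighbor outside itself. Write $s=|D|$, $D=\{v_1,\ldots,v_s\}$, and set $W=\bigcap_{i=1}^{s}N(v_i)\setminus D$. If $W\neq\emptyset$, any $u\in W$ gives the clique $D^*=D\cup\{u\}$ of size $s+1$ with $|D^*\cap D|=s$, and we are done. Note that when $s\le r$, Lemma~\ref{lem 3} applied to $D$ already forces $W\neq\emptyset$, so the interesting range is $r<s\le 2r$ with $W=\emptyset$.

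In this case the strategy is to replace one vertex of $D$ by two adjacent common neighbors of $D\setminus\{v_j\}$. For each $j\in[s]$, define
\[
S_j=\{\,x\in V(G)\setminus D:D\setminus N(x)=\{v_j\}\,\},
\]
the set of vertices outside $D$ whose unique non-neighbor in $D$ is $v_j$, and let $R=\{\,x\in V(G)\setminus D:|D\setminus N(x)|\ge 2\,\}$. Since $W=\emptyset$, the sets $S_1,\ldots,S_s,R$ partition $V(G)\setminus D$. If some $|S_j|>\alpha(G)$, then $S_j$ is not independent and contains an edge $uw\in E(G)$; the set $D^*=(D\setminus\{v_j\})\cup\{u,w\}$ is then a clique of size $s+1$ (since $u$ and $w$ are adjacent to every $v_i$ with $i\neq j$, and to each other) with $|D^*\cap D|=s-1$, which is exactly what we need.

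It therefore suffices to rule out $|S_j|\le\alpha(G)\le\delta n=\epsilon n/4$ for every $j$. I would double-count the non-edges between $D$ and $V(G)\setminus D$:
\[
\sum_{x\in V(G)\setminus D}|D\setminus N(x)| \;=\; \sum_{i=1}^s(n-1-d(v_i)) \;\le\; s\bigl((1/r-\epsilon/3)n-1\bigr),
\]
while the same sum is at least $\sum_j|S_j|+2|R|$. Substituting $|R|=n-s-\sum_j|S_j|$ and simplifying yields
\[
\sum_j|S_j| \;\ge\; \Bigl(2-\tfrac{s}{r}\Bigr)n+\tfrac{s\epsilon}{3}n-s \;\ge\; \tfrac{s\epsilon}{3}n-s,
\]
where the last inequality is the only place the hypothesis $s\le 2r$ is used (so that $2-s/r\ge 0$). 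The assumed upper bound $\sum_j|S_j|\le s\epsilon n/4$ then forces $n\le 12/\epsilon$, contradicting $n>N(\epsilon)$ if we take $N(\epsilon)=12/\epsilon$.

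The main hurdle is arranging the double-counting so that the hypothesis $|D|\le 2r$ is used tightly; this is precisely what the coefficient $2-s/r$ captures, and the argument would collapse once $s$ exceeds $2r$ because that coefficient would turn negative, leaving no slack between the $\epsilon/3$ and $\epsilon/4$ terms.
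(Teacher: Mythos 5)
Your proposal is correct and follows essentially the same route as the paper: if no vertex outside $D$ is adjacent to all of $D$, partition the outside vertices by their non-neighbors in $D$ (your $S_j$ are the paper's $W_i$ and your $R$ is the paper's $U$), use the degree hypothesis and $s\le 2r$ in a counting argument to show some $S_j$ exceeds $\alpha(G)$, and swap an edge of $S_j$ for $v_j$. Counting non-edges rather than edges between $D$ and $V(G)\setminus D$ is only a cosmetic difference, and your numerical bookkeeping (the coefficient $2-s/r\ge 0$ and the gap between $\epsilon/3$ and $\epsilon/4$) matches the paper's.
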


\begin{proof}
We set $|D|=s\leq 2r$ and $G = G(n)$.  If there is a vertex $v\in V(G) -D$ that is adjacent to
all vertices in $D$, then $D\cup \{v\}$ is the desired clique. So, we assume that
$d_D(v) \le s-1$ for all $v\in V(G) -D$.  We divide $V(G- D)$ into two vertex-disjoint sets $U$ and $W$ such that
\begin{eqnarray*}
U & = & \{v\in V(G) - D \ |\ d_D(v)\leq s-2\}, \mbox{ and} \\
W & = & \{v\in V(G) -  D) \ |\ d_D(v)=s-1\}.
\end{eqnarray*}

\noindent Computing  the edges between $D$ and $V(G) - D$, we get the following inequality.
\begin{eqnarray*}
(s-2)|U|+(s-1)|W| \geq \sum_{v\in D} d_{G-D}(v) \ge s\left(\left(1-\frac 1r + \frac {\epsilon}3\right)n-(s-1)\right)
\end{eqnarray*}

\noindent By assuming $n \ge N > \frac 3{\epsilon}$ and applying the inequality $s \le 2r$, we get
\[
|W|\geq \left(2-\frac{s}{r}+s\frac{\epsilon}{3}-\frac{s}{n}\right)n
>  \left(s\frac{\epsilon}{3}-\frac{s}{n}\right)n
> 0.
\]

\noindent Let $D= \{v_1,v_2,\dots,v_s\}$ and divide $W$ into $s$ vertex-disjoint sets as $W_1,W_2,\dots W_s$ such that $W_i=\{w\in W \ |\ v_iw\notin E(G)\}$
for each $1\le i \le s$.  We claim $W_i$ is an independent set. Otherwise, let $x$ and $y$ be two adjacent vertices in $W_i$. Then, $D^* = D\backslash \{v_i\}\cup\{x, y\}$ is the desired clique.  Hence,

\[
\alpha (G)\geq max\{|W_i|,{1\leq i\leq s}\}> \left(\frac{\epsilon}{3}-\frac{1}{n}\right)n>\delta n,
\]
giving a contradiction.
\end{proof}

\subsection{Proof of Theorem~\ref{kr}}
Recall the graph $F_{k}(2r+1)$ is the union of $k$ copies of $K_{2r+1}$ sharing a common vertex. We call the common vertex the center of $F_{k}(2r+1)$.  For convention, if $k=0$, $F_{k}(2r+1)$ is a single vertex graph, which is the center.  For any $m$ nonnegative integers
$k_1$, $k_2$, $\dots$, $k_m$, let $F_{k_1, k_2, \dots, k_m}(2r+1)$ be the graph consisting of
a clique $B$ with $m$ vertices and $m$ vertex-disjoint copies of $F_{k_i}(2r+1)$ with its center in $B$.  In the above definition, we call $B$ the {\it base}  of $F_{k_1, k_2, \dots, k_m}(2r+1)$.

We now show
$RT(n,F_{k}(2r+1),o(n))=\frac{n^2}{2}(1-\frac{1}{r})+o(n^2)$, where $r\geq 2$, as the theorem holds when $r=1$ by Lemma \ref{lem 1}. Since $RT(n, K_{2r+1}, o(n)) = \frac{n^2}{2} (1 -\frac{1}{r}) + o(n^2)$,  we only need to show that $RT(n,F_{k}(2r+1),o(n)) \le \frac{n^2}{2}(1-\frac{1}{r})+o(n^2)$.   Let $\epsilon>0$ be a small positive real number. We will show that there exists a positive number $\delta := \delta(\epsilon)$ and a large positive integer $N:=N(\epsilon)$ such that  all graphs $G(n)$ with $n>N$, if $|E(G)|\geq (1-\frac{1}{r}+\epsilon)n^2/2$ and $\alpha(G(n))\leq \delta n$, then there exists a $F_k(2r+1)$ as a subgraph of $G(n)$.  In the proof, we will take $\delta =\frac{\sqrt{2}}{10}\epsilon^2$ and we will not specify the value of $N(\epsilon)$ by only assuming $n$ is large enough to ensure our counting work.

Since $|E(G)| \ge \left(1-\frac{1}{r}+\epsilon\right)n^2/2$, by Lemma~\ref{lem 2}, $G$ contains a subgraph $G_1$ of order $n_1 \ge \sqrt{\epsilon/2}n$ such that the minimum degree $\delta(G_1) \ge (1 -\frac 1r - \frac{\epsilon}2) n_1$. Since $\alpha(G_1) \le \alpha(G) \le \frac{\sqrt{2}}{10} \epsilon^2 n \le \frac{\epsilon}5 n_1$.  So, we assume without loss of generality that $\delta(G) \ge (1 -\frac 1r - \frac{\epsilon}2) n$ and $\alpha(G) \le \frac{\sqrt{\epsilon}}{5} n$.

For each integer $m$ with $1 \le m \le r+1$, let $\FF_{m}$ be the set of $m$-tuples  $(k_1, k_2, \dots, k_m)$ of $m$ nonnegative integers $k_1$, $k_2$, $\dots$, $k_m$ listed with non-increasing order such that $G$ contains a copy of $F_{k_1, k_2, \dots, k_m}(2r+1)$.  By Lemma~\ref{lem 3}, $G$ contains a copy of $K_{m}$ for every $1\le m \le r+1$. Thus,
$\FF_m \ne \emptyset$.  We also notice that if there exists an $m$-tuple $(k_1, k_2, \dots, k_m)\in \FF_m$ with $k_1 \ge k$, then $G$ contains a copy of $F_k(2r+1)$. So, we assume $k_1 \le k-1$ for all $(k_1, k_2, \dots, k_m)\in \FF_m$.  Consequently, $|\FF_m| \le m^k$.

For each $\FF_m$, we define a lexicographic order $\prec$ such that
$(k_1, k_2, \dots, k_m) \prec (\ell_1, \ell_2, \dots, \ell_m)$ if there exist $s$ with $1\le s\le m$ such that
$k_i = \ell_i$ for $i< s$ and $k_i < \ell_i$.  Clearly, $(0,0, \dots, 0)$ is the minimum element in $\FF_m$ following this order.

Let $(k_1, k_2, \dots, k_{r+1})$ be the maximum element in
$\FF_{r+1}$. We will lead a contradiction by showing that either $G$ contains a copy of $F_{k}(2r+1)$ or $(k_1, k_2, \dots, k_{r+1})$ is not maximum in $\FF_{r+1}$. Let $F$ be a copy of $F_{k_1, k_2, \dots, k_{r+1}}(2r+1)$ in $G$ and $B$ be the base. Denote $B$ by $\{v_1, v_2, \dots, v_{r+1}\}$  and assume $v_i$ is the center of the corresponding  $F_{k_i}(2r+1)$ in $F$.  Let $H$ be obtained from $G_1$ by deleting all vertices in $V(F) -B$, i.e., $H = G - (V(F) -B)$. Since $|V(F)| \le 2r(k-1)(r+1)+(r+1)$ and $\delta(G) \ge (1 -\frac 1r +\frac{\epsilon}2)n$,  we may assume the minimum degree $\delta(H) \ge (1-\frac 1r - \frac{\epsilon}3) |H|$ and $\alpha(H) \le \frac{\epsilon}4 |H|$  provided $n$ is large enough.

Starting with clique $B_0 = B$ in $H$, we apply Lemma~\ref{lem 4} repeatedly $r$ times, we get  a sequence of cliques $B_1$, $B_2$, $\dots$, $B_r$ such that
$|B_i| = |B_{i-1}| +1$ and $|B_i\cap B_{i-1}| \ge |B_{i-1}| -1$.  At the end, we have
$|B_r| = |B_0| +r = (r+1) +r = 2r+1$ and $|B_r \cap B_0| \ge |B_0| -r \ge 1$.  Let $s\ge 1$ be the smallest integer such that $v_s\in B_r\cap B_0$. Then, we get subgraph $F^*$ which is a copy of $F_{k_1, k_2, \dots, k_s+1}(2r+1)$ with base $B^*=\{v_1, v_2, \dots, v_s\}$. Let $H^* = G- (V(F^*) - B^*)$.

Applying Lemma~\ref{lem 3} repeatedy $(r+1 -s)$ times, we get a clique $B' \supseteq B^*$ with $r+1$ vertices in $H^*$. Combining $B'$ with $F^*$, we get a copy of $F_{k_1, \dots, k_s+1, 0,\dots, 0}(2r+1)$ with base $B'$.  Let $\ell_1, \ell_2, \dots, \ell_s$ be an new list  of
$k_1, k_2, \dots, k_s+1$ in non-increasing order. Then, $(r+1)$-tuple $(\ell_1, \ell_2, \dots, \ell_s, 0, \dots, 0)\in \FF_{r+1}$. But,
$(k_1, k_2, \dots, k_{r+1}) \prec (\ell_1, \ell_2, \dots, \ell_s, 0, \dots, 0)$, giving a contradiction to the maximality of $(k_1, k_2, \dots, k_{r+1})$.

\section{Acknowledgement}

We would like to thank Yan Cao for the helpful communication.

\bibliographystyle{plain}
\bibliography{CL-Reference}
\bibliographystyle{amsplain}

\end{document}